\def \1{\mathds{1}}
\def \al{\alpha}
\def \bs{\backslash}
\def \C{{\mathbb C}}
\def \CM{\mathcal M}
\def \CO{{\cal O}}
\def \CP{{\cal P}}
\def \CT{{\cal T}}
\def \diag{\operatorname{diag}}
\def \disc{\operatorname{disc}}
\def \eps{\varepsilon}
\def \Ga{\Gamma}
\def \ga{\gamma}
\def \H{{\mathbb H}}
\def \LG{\operatorname{LG}}
\def \M{\operatorname M}
\def \N{{\mathbb N}}
\def \ol{\overline}
\def \ph{\varphi}
\def \PSL{\operatorname{PSL}}
\def \Q{{\mathbb Q}}
\def \R{{\mathbb R}}
\def \Re{\operatorname{Re}}
\def \red{\mathrm{red}}
\def \SL{\operatorname{SL}}
\def \tr{\operatorname{tr}}
\def \Z{{\mathbb Z}}
\def \({\left(}
\def \){\right)}
\newcommand{\e}
[1]{\emph{#1}\index{#1}}
\newcommand{\mat}
[4]{\(\begin{matrix}#1 & #2 \\ #3 & #4\end{matrix}\)}
\newcommand{\smat}
[4]{\(\begin{smallmatrix}#1 & #2 \\ #3 & #4\end{smallmatrix}\)}
\renewcommand{\sp}
[1]{\left\langle #1\right\rangle}
\newcommand{\stack}
[2]{\genfrac{}{}{0pt}{1}{#1}{#2}}
\newtheorem{theorem}{Theorem}[section]
\newtheorem{question}[theorem]{Question}
\theoremstyle{definition}
\newtheorem{definition}[theorem]{Definition}
\newtheorem{example}[theorem]{Example}
\newtheorem{remark}[theorem]{Remark}
\begin{document}

\pagestyle{myheadings} \markright{Closed geodesics and bounded gaps}

\title{Closed geodesics and bounded gaps}
\author{Anton Deitmar\thanks{This paper was written while the author was invited to the DFG-Sonderforschungsbereich 878 "Gruppen, Geometrie und Aktionen" in M\"unster. He wishes to express his gratitude to all people there for their warm hospitality.}}
\date{}
\maketitle

{\bf Abstract:} The bounded gaps property of the prime numbers, as proven by Yitang Zhang, is considered for sequences of lengths of closed geodesics, which by the theory of Selberg zeta functions are the geometric analogue of the prime numbers.
It turns out that the property holds for congruence subgroups and is false for a dense set in Teichmüller space.

$$ $$

\tableofcontents

\section*{Introduction}

In the paper \cite{Zhang}, Yitang Zhang proved that there are infinitely many primes $p,q$ such that $|p-q|<C$, where $C>0$ is an explicit constant.
In \cite{Maynard}, James Maynard improved the bound to $C=600$. The current state of the Polymath8 project in improving the constant is $C=246$.
Viewing Ruelle zeta functions as a geometric analogue of the Riemann zeta functions, one is led to ask whether a similar result holds for the latter.

For a sequence $(x_n)$ of real numbers $x_n>0$ with $x_{n+1}>x_n$ we define the \e{limit gap} of the sequence as
$$
\LG((x_n))= \liminf_{n}|x_{n+1}-x_n|.
$$
Zhang's result says that for the sequence $\pi$ of prime numbers one has $\LG(\pi)<7\times 10^7$, where before it wasn't even known to be finite.
By Maynard's result we know $\LG(\pi)\le 600$ and, as one finds on their weibsite, the Polymath8 project is down to $\LG(\pi)\le 246$.
The number $\LG(\pi)$ is an integer $\ge 2$ and 
the twin prime conjecture says that $\LG(\pi)$ should be equal to $2$.

We say that an increasing sequence $x=(x_n)$ has  \e{bounded gaps}, if $0<\LG(x)<\infty$.
Zhang has proved that the ascending enumeration of the prime numbers has bounded gaps.
Geometric zeta functions of Ruelle, Selberg and Ihara are defined by Euler products which don't run over the primes but over lengths of closed geodesics in locally symmetric spaces or finite graphs respectively.
The sequences of lengths, or rather, the exponentials of the lengths will be called the \e{length spectra} of the underlying spaces.
So the length spectra may be viewed as a geometric analogue of the primes.
This paper is concerned with the question which length spectra  have bounded gaps.
It is shown that congruence subgroups of $\SL_2(\Z)$ define spaces with the bounded gaps property.
On the other hand, there is a dense set in Teichmüller space where the bounded gaps property does not hold.
It remains open whether general arithmetic groups do have  bounded gaps.
They would, if the property was hereditary to subgroups of finite index, which is not known.

As an analogue of the Teichmüller space we define a moduli space of metric graphs in which case limit gaps can be computed rather explicitly.

The author thanks the users named 'quid' and 'Impossible Hippopotamus' of mathoverflow.net for their helpful remarks.

\section{Metric graphs}
Recall that for a sequence $(x_n)$ of real numbers $x_n>0$ with $x_{n+1}>x_n$ we define the \e{limit gap} of the sequence as
$$
\LG((x_n))= \liminf_{n}|x_{n+1}-x_n|.
$$

\begin{remark}\label{rem1.1}
Note that $\LG((x_n))>0$ implies that 
$$
\#\big\{n\in\N:x_n\le T\big\}=O(T)\qquad \text{as }T\to\infty.
$$
\end{remark}

\begin{example}\label{Ex1.1}
For any real $a>1$ the limit gap of the sequence $a^n$ is $+\infty$.
\begin{proof}
For every $n\in\N$ one has
$$
|a^{n+1}-a^n|=a^n(a-1),
$$
so the consecutive gaps $|a^{n+1}-a^n|$ increase with $n$ and grow over all bounds.
\end{proof}
\end{example}

Metric graphs serve as the discrete model for more general geodesic spaces.
In this case, one has rather good control over the length spectra, which can be computed rather explicitly.

By a \e{graph} we mean a pair $(V,\ph)$ where $V$ is a countable set, called the \e{vertex set} and $\ph:E\to \CP_{1,2}(V)$ is a map, where $E$ is a set, the \e{edge set}, and $\CP_{1,2}(V)$ is the set of subsets of $V$ which contain one or two elements.
So loops and multiple edges are generally allowed.
A \e{geometric realization} of a graph $(V,E)$ is obtained a follows. For each edge $k$ fix a copy $I_k$ of the unit interval and choose a marking of the endpoints with $a,b$ if $k=\{a,b\}$. In the topological space $\coprod_{k\in E}I_k$, which is the disjoint union of all these intervals, we identify all endpoints marked with the same element of $V$.
The resulting space is a geometric realization of the graph.
A \e{metric graph} is obtained from this by  attaching to each edge $e$ a length $l(e)\in (0,\infty)$.
One then derives an internal metric on the geometric realization, if the graph is connected. One defines the distance of points which lie on different connected components to be $+\infty$ and thus gets a generalized metric on the entire graph.
The \e{moduli space} $\CM(V,\ph)$ is defined to be the set of all metrics in this sense, i.e., the set of all maps $l:E\to (0,\infty)$.
It carries a natural compact-open topology generated by all sets of the form
$$
L(K,U)=\{ l\in\CM(V,\ph): l(K)\subset U\},
$$
where $U\subset (0,\infty)$ is an open subset and $K\subset E$ is finite.

If $E$ is finite itself, the moduli space can be naturally compactified, the boundary consisting of moduli spaces of degenerations of $(V,E)$.

For a finite metric graph $X$ we define the \e{Ihara zeta function} \cites{Ihara,Serre,Sunada,Hashimoto,Bass} to be
$$
Z_X(s)=\prod_{c}(1-N(c)^{-s})^{-1},
$$
where the product runs over all closed geodesics, $N(c)=e^{l(c)}$ and $l(c)$ is the length of the geodesic $c$.

\begin{theorem}
For a finite metric graph $X$, the product defining the Ihara zeta function converges for $\Re(s)$ large enough and the resulting function can be extended to a meromorphic function on $\C$ which is the inverse of a polynomial in the entries $e^{-sl(k)}$, where $k$ runs through the finite set of edges.

If a sequence $(X_j)$ of finite metric graphs converges in the compactified moduli space $\CM(V,\ph)$ to a graph $X$, then the inverses of the corresponding Ihara zeta functions $Z_{X_j}(s)^{-1}$ converge locally uniformly on $\C$ to $Z_X$.
\end{theorem}

\begin{proof}
Elementary estimates using the finiteness of the graph show that the sum $\sum_ce^{-sl(c)}$ converges for $\Re(s)$ large enough. This settles convergence.
The  argument for the second assertion goes back to Hyman Bass \cite{Bass}. We only provide the necessary changes for the proof to work for metric graphs.
Let $\tilde E$ be the set of all oriented edges, i.e., $\tilde E$ is the pullback of $E\to \CP(V)$ under the natural map $V\times V\to\CP(V)$; $(a,b)\mapsto\{a,b\}$.
More precisely, the elements of $\tilde E$ are all pairs $(k,(a,b))$ in $E\times (V\times V)$ with the property that $\ph(k)=\{a,b\}$.
For a given oriented edge $(k,(a,b))$ the \e{opposite edge} is defined by
$\ol{(k,(a,b))}=(k,(b,a))$.

For $(k,(a,b))\in\tilde E$ we say that $a$ is the \e{source} and $b$ the \e{target} of $(k,(a,b))$.
Let $F(\tilde E)$ be the free $\C$-vector space spanned by $\tilde E$ and for $s\in\C$ define the linear operator $T_s:F(\tilde E)\to F(\tilde E)$ by
$$
T_s(k)=\sum_{\stack{k'\ne \bar k}{s(k')=t(k)}}e^{-sl(k')}k',
$$
where $k\in\tilde E$ now and $\bar k$ is the edge opposite to $k$.
We equip $F(\tilde E)$ with the inner product making the given basis $(k)_{k\in\tilde E}$ an orthonormal basis.
We denote by $L$ the length function which assigns the length $1$ to every edge.
Then for any $n\in\N$, the trace of $T^n$ is
\begin{align*}
\tr(T^n)&= \sum_{k}\sp{T^nk,k}\\
&= \sum_{c:L(c)=n}e^{-sl(c)}L(c_0),
\end{align*}
where the sum runs over all closed geodesics and $c_0$ is the unique primitive geodesic underlying a given closed geodesic $c$.
The function $s\mapsto\det(1-T_s)$ is entire and is a polynomial in $e^{-sk}$, where $k$ ranges in $E$.
We will show that this function equals $Z_X(s)$ from which all assertions of the proposition follow.
We keep the notation marking primitive geodesics by $c_0$ and compute
\begin{align*}
Z(s)^{-1}&=\prod_{c_0}(1-e^{-sl(c_0)})=\exp\(\sum_{c_0}\log(1-e^{-sl(c_0)})\)\\
&=\exp\(-\sum_{c_0}\sum_{n=1}^\infty\frac{e^{-snl(c_0)}}n\)\\
&=\exp\(-\sum_{c}\frac{e^{-sl(c)}}{L(c)}L(c_0)\)= \exp\(-\sum_{n=1}^\infty\frac1n\tr(T_s^n)\)\\
&=\exp\(\tr\log\(1-T_s\)\)=\det(1-T_s)\tag*\qedhere
\end{align*}
\end{proof}

\begin{definition}
We call a metric graph $X$ a \e{rational graph}, if there exists $\theta>0$ such that all lengths of closed geodesics lie in $\theta\Q$.
This is for instance the case, if all lengths of edges are in $\theta\Q$.
\end{definition}

\begin{definition}
A connected graph $X$ is called \e{non-tivial} if it cannot be contracted to a point orr a circle.
This is equivalent to $X$ having infinitely many primitive closed geodesics.
\end{definition}

For a metric graph $X$, we write $N(X)$ for the sequence of the values $N(c)$ in ascending order and without repetition and we write $\LG(X)=\LG(N(X))$.

\begin{theorem}
For a  non-trivial finite graph $X$ one has 
$$
\LG(X)=\begin{cases}\infty& X\text{ is rational},\\ 0&\text{otherwise}.\end{cases}
$$
\end{theorem}

\begin{proof}
Assume first that $X$ is rational.
As $X$ is finite, we can assume that there is $\theta>0$ such that all lengths of edges are in $\theta\N$.
So the sequence $N(X)$ is a subsequence of the sequence $(e^{\theta n})_{n\in\N}$, therefore
$$
\LG(X)\ge\LG\((e^{\theta n})\).
$$
As $e^{\theta n}=\(e^{\theta}\)^n$, the right hand side is $+\infty$ by Example \ref{Ex1.1}.

Now assume that $X$ is not rational.
After scaling, the graph contains two closed geodesics $c_a$ and $c_1$ of lengths $a>0$ and $1$ resp., with $a\notin\Q$. For a pair of natural numbers $(m,n)\in\N^2$ we can iterate the geodesic $c_a$ for $m$ times, then iterate $c_1$ for $n$ times and then go back to the starting point to get a closed geodesic of total length $l(m,n)=am+n+2\delta$, where $\delta>0$ is the distance between a given vertex in $c_a$ and one in $c_b$.
For another pair $(k,l)$ we have
\begin{align*}
e^{l(m,n)}-e^{l(k,l)}&=
e^{ma+n+2\delta}-e^{ka+l+2\delta}\\
&=\(e^{a(m-k)+(n-l)}-1\)e^{al+l+2\delta}.
\end{align*}
For this to remain bounded for infinitely many geodesics, the bracket expression must be small, so the exponent $a(m-k)+(n-l)$ must be close to zero.
But close to zero, $e^x-1$ behaves like $x$, so that we consider
$$
\left|a(m-k)+(n-l)\right|e^{ak+l+2\delta}.
$$
For $a>0$, this expression is less than a given $C>0$ if
\begin{align*}
\left|a-\frac{n-l}{m-k}\right|\le \frac{C}{e^{ak+l+2\delta}(m-k)}.
\end{align*}
Fix $k$ and $l$ and let $m$ and $n$ vary. Setting $p=n-l$ and $q=m-k$ we have to show that there are infinitely many pairs $(p,q)\in\N^2$ such that
$$
\left| a-\frac pq\right|\le \frac D q
$$
for any given $D>0$. This is well known, see \cite{Chandra}.
\end{proof}

\section{Riemann surfaces}
Let $X$ denote a connected Riemann surface equipped with a hyperbolic metric such that $X$ has finite volume.
Then the universal covering of $X$ is the upper half plane $\H$ in $\C$ and so $X$ is the quotient $\Ga\bs \H$, where $\Ga$ is the fundamental group of $X$, acting as biholomorphic maps on $\H$, hence $\Ga$ may be viewed as a discrete subgroup of $\PSL_2(\R)=\SL_2(\R)/\pm 1$, which through the action by linear fractionals,
$
\smat abcd z=\frac{az+b}{cz+d}
$
is identified with the group of biholomorphic maps on $\H$.

The Ruelle zeta function \cite{Ruelle, BO} is defined as
$$
R(s)=\prod_{c}(1-N(c)^{-s})^{-1},
$$ 
where the product runs over all primitive closed geodesics on $X$ and $N(c)=e^{l(c)}$, where $l(c)$ is the length of the closed geodesic $c$.
Here a closed geodesic is called \e{primitive}, if it is not a power of a shorter one. 
Using the identification between closed geodesics and hyperbolic conjugacy classes in $\Ga$, one extends this theory to include non torsion-free lattices $\Ga$ like $\PSL_2(\Z)$.
It is known, \cites{Ruelle, BO,HA2}, that $R(s)$ extends to a meromorphic function on $\C$ and that in the compact case all its zeros and poles lie in the union of $\R$ with the two vertical lines $\{\Re(s)=\pm \frac12\}$.
If one compares the Ruelle function with the Riemann zeta function, the role of the primes is taken over by the numbers $N(c)$.
So let $N(\Ga)$ denote the sequence of values $N(c)$ in ascending order.
Note that multiplicities occur, i.e., there may be different closed geodesics $c,c'$ which are not equal and not inverse to each other, such that $l(c)=l(c')$.
We plainly ignore multiplicities, as they don't influence our question. 
We shall write $\LG(\Ga)$ for $\LG(N(\Ga))$.
We call $\Ga$ a \e{bounded gaps group} if the sequence $N(\Ga)$ has bounded gaps, i.e., if $0<\LG(\Ga)<\infty$ holds.

If $\Sigma$ is a finite index subgroup of $\Ga$, then one has
$$
\LG(\Sigma)\ge \LG(\Ga).
$$
But the following question remains open.

\begin{question}\label{quest}
If $\Sigma$ is a finite index subgroup of $\Ga$, is it true that 
\begin{itemize}
\item $\LG(\Ga)<\infty\quad\Rightarrow\quad \LG(\Sigma)<\infty$?
\item $\LG(\Sigma)>0\quad\Rightarrow\quad \LG(\Ga)>0$?
\end{itemize}

\end{question}

In the next section we shall prove that principal congruence subgroups of $\PSL_2(\Z)$ and unit groups of quaternion algebras have bounded gaps.
Since any arithmetic group is commensurable to one of these, a positive answer to the above question would imply that every arithmetic groups has bounded gaps. 

\begin{theorem}
 One has $\LG(\Ga)=0$ for $\Ga$ in a dense subset of the Teichmüller space.
\end{theorem}

\begin{proof}
Let $g\ge 2$ be a natural number.
Let $\CT_g$ denote the Teichmüller space of all compact surfaces of genus $g$.
Let $X\in\CT_g$ and for $x>0$ let $v(x)$ be the number of closed geodesics $c$ with $N(c)\le x$.
By the Prime Geodesic Theorem \cite{Huber,Hejhal,Randol} one has
$$
v(x)\sim \frac{x^2}{\log(x^2)}
$$
as $x\to\infty$.
By Remark \ref{rem1.1}, this implies $\LG(\Ga)=0$ if there are no multiplicities, i.e., if  different closed geodesics don't share the same length.
The set of $X\in\CT_g$ without multiplicities is a dense set, as is shown in \cite{McShane}.
\end{proof}

\section{Arithmetic groups}
For $N\in\N$, $N\ge 2$, we consider the principal congruence subgroup $\Ga(N)$, which is the kernel of the group homomorphism $\PSL_2(\Z)\to\PSL_2(\Z/N\Z)$.
Recall that s subgroup $\Sigma\subset\Ga$ is called a \e{congruence subgroup}, if $\Sigma$ contains $\Ga(N)$ for some $N\in\N$.

\begin{theorem}\label{prop1.2}
\begin{enumerate}[\rm (a)]
\item Let $\Ga=\PSL_2(\Z)$. Then $\LG(\Ga)=1$, so $\Ga$ has bounded gaps.
\item Every congruence subgroup of $\Ga$ has bounded gaps.
\end{enumerate}
\end{theorem}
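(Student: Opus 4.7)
The strategy for both parts is to parametrise hyperbolic conjugacy classes by their integer trace. If $\ga \in \SL_2(\Z)$ is hyperbolic with $t := |\tr\ga| \ge 3$, then $\ga$ is conjugate over $\R$ to $\diag(\la,\la^{-1})$ with $\la + \la^{-1} = t$, so the associated norm is the larger root of $X^2 - tX + 1 = 0$, namely
$$
N_t \;=\; \frac{t + \sqrt{t^2 - 4}}{2}.
$$
From $N_t = t - 1/N_t$ one obtains the expansion $N_t = t - 1/t + O(t^{-3})$, and $t \mapsto N_t$ is strictly increasing on $t \ge 3$.

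\textbf{Part (a).} Every integer $t \ge 3$ is realised as a trace of a hyperbolic element of $\PSL_2(\Z)$, for instance by $\smat{t}{1}{-1}{0}$. For any primitive hyperbolic $\ga$ of trace $t$, the power $\ga^k$ has integer trace $\la^k + \la^{-k} \ge 3$, so $N(\ga^k)$ again lies in $\{N_{t'} : t' \in \Z_{\ge 3}\}$. Hence the strictly ascending sequence $N(\Ga)$ of distinct norms is precisely $N_3 < N_4 < N_5 < \cdots$, and
$$
N_{t+1} - N_t \;=\; 1 + \frac{1}{N_t} - \frac{1}{N_{t+1}} \;=\; 1 + O(t^{-2}),
$$
so $\LG(\Ga) = 1$.

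\textbf{Part (b).} The inequality $\LG(\Ga(N)) \ge \LG(\PSL_2(\Z)) = 1$ from the paragraph preceding Question~\ref{quest} already gives $\LG(\Ga(N)) > 0$, so only the upper bound remains. For every $k \ge 1$ the matrix $\ga_k := \smat{1+kN^2}{kN}{N}{1}$ has determinant $1$, off-diagonal entries divisible by $N$, and diagonal entries $\equiv 1 \pmod N$, so it lies in $\Ga(N)$ and satisfies $\tr\ga_k = t_k := 2 + kN^2$. Since $t_{k+1} - t_k = N^2$ and $N_t = t + O(t^{-1})$, we get
$$
N_{t_{k+1}} - N_{t_k} \;=\; N^2 + O(k^{-1}).
$$
The norms $N_{t_k}$ form a subsequence of $N(\Ga(N))$ whose consecutive gaps are bounded, and this forces $\LG(\Ga(N)) \le N^2 < \infty$, so $\Ga(N)$ has bounded gaps.

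\textbf{Main obstacle.} There is no deep obstacle; the proof is a direct calculation whose essential ingredients are the trace--norm formula $N_t = (t+\sqrt{t^2-4})/2$ with expansion $N_t = t + O(t^{-1})$, and the explicit family $\ga_k$ realising traces in arithmetic progression inside $\Ga(N)$. The one point demanding a moment of care in (a) is the assertion that powers of primitive hyperbolic elements introduce no new values between consecutive $N_t$; this is automatic because $\tr(\ga^k) = \la^k + \la^{-k}$ is again an integer $\ge 3$ whenever $\ga$ is hyperbolic in $\SL_2(\Z)$.
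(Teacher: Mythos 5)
Your argument is sound in its essentials but takes a genuinely different, and considerably more elementary, route than the paper. The paper maps hyperbolic classes to orders $\CO_D$ in real quadratic fields, identifies the norm of a primitive class with the fundamental unit $\eps_D$, and then needs an analytic input — Sprind\v{z}uk's count of fundamental units of fundamental discriminants — plus a density argument (for (b), restricted to the progression $a\equiv 2\bmod N^4$ and combined with an explicit matrix whose entries involve $D$) to conclude that enough traces are realised. You bypass all of this: every norm is $N_t=(t+\sqrt{t^2-4})/2$ for an integer trace $t\ge 3$, every such $t$ is realised (by $\smat{t}{1}{-1}{0}$ in (a), and by the clean one-parameter family $\ga_k=\smat{1+kN^2}{kN}{N}{1}\in\Ga(N)$ with $\tr\ga_k=2+kN^2$ in (b)), and the gap asymptotics follow from $N_t+N_t^{-1}=t$. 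What your version loses, and what the paper's heavier machinery silently buys, is control of \emph{primitivity}, which is the one gap you need to close.

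Concretely: if $N(\Ga)$ is read as the set of norms of all closed geodesics (a reading consistent with the subsequence remark before Question~\ref{quest}), your proof is complete as written. But under the reading suggested by the Euler product and by the paper's own proof — norms of \emph{primitive} geodesics — you must show that the values $N_t$, resp.\ $N_{t_k}$, are attained by primitive classes; exhibiting a matrix of the right trace is not enough, since a matrix of trace $t$ can be a proper power (for instance, trace-$7$ classes attached to the maximal order of $\Q(\sqrt5)$ are squares of trace-$3$ classes, as $N_7=N_3^2$). The fix is a short Pell-equation argument. For (a): the centralizer order of $\smat{t}{1}{-1}{0}$ in $\M_2(\Z)$ is exactly $\Z[\ga]\cong\CO_{t^2-4}$, and any norm-one unit $\frac{u+v\sqrt{t^2-4}}2>1$ of this order has $v\ge1$, hence $u^2=4+v^2(t^2-4)\ge t^2$; so $N_t$ is the fundamental norm-one unit of this order and the class is primitive. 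For (b): the centralizer order of $\ga_k$ is $\Z[\om]$ with $\om=(\ga_k-1)/N=\smat{kN}{k}{1}{0}$, of discriminant $\Delta=k^2N^2+4k$, and $N_{t_k}=\frac{t_k+N\sqrt\Delta}2$ is its unit with $v=N$; writing a putative smaller norm-one unit as $\frac{u+v\sqrt\Delta}2$ with $u=vkN+j$, $j\ge1$, the equation $u^2-\Delta v^2=4$ becomes $k(2jvN-4v^2)=4-j^2$, which has no solution with $1\le v\le N-1$ (check $j=1$, $j=2$, $j\ge3$ separately). Hence $N_{t_k}$ is fundamental in this order, so $\ga_k$ is primitive in $\PSL_2(\Z)$ and a fortiori in $\Ga(N)$. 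With these two observations added, your proof is complete and entirely elementary, and in particular it removes the need to cite Sprind\v{z}uk at all.
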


\begin{proof}
(a) Let $\bar\ga\in\Ga$ be a hyperbolic element and let $\ga\in\SL_2(\Z)$ be a preimage of $\ga$.
Let $\M_2(\Q)$ be the algebra of $2\times 2$ matrices and let $\M_2(\Q)_\ga$ be the centralizer of $\ga$.
This then is a real quadratic field and $\CO_\ga=\M_2(\Q)_\ga\cap\M_2(\Z)$ is an order in this field, so it is isomorphic to some
$$
\CO_D=\left\{\frac{a+b\sqrt D}2:a\equiv bD\mod(2)\right\},
$$
where $D\in\N$, $D\equiv 0,1\mod(4)$ not a square.
In this way one gets a map from the set of hyperbolic conjugacy classes in $\Ga$ to the set of isomorphism classes of orders in real quadratic fields, which are parametrized by $D$ as above, under which a given order $\CO_D$ has exactly $h(D)$ preimages, where $h(D)$ is the class number (see \cite{class}).

The unit group $\CO_D^\times$ consist of all $\al\in\CO_D$ of norm $N(\al)=\al\ol\al=\pm 1$, where we have written $\ol{a+b\sqrt D}=a-\sqrt D$ for the non-trivial Galois homomorphism.
The subgroup $\CO_D^1$ of all $\al$ with $N(\al)=1$ has index one or two.
This group is isomorphic to $\{\pm 1\}\times \eps_D^\Z$ for a unique \e{fundamental unit} $\eps_D>1$.

If $\ga$ is primitive, it gets mapped to $\pm\eps_D$ and we choose $\ga$ in a way that it gets mapped to $\eps_D$.
We then find that actually,
$$
\eps_D=N(\ga)=e^{l(\ga)}.
$$
We say that $D\equiv 0,1\mod(4)$ is a \e{fundamental discriminant}, if $D$ is square-free or $D/4$ is square-free respectively.
A variation of the arguments  in \cite{Sprind} shows that
the number of $\eps_D\le x$ for $D$ being a fundamental unit, is $x+O(x^{1/2})$.
Hence the number of $\eps_D\le x$ grows at least like $x$.
In the expression $\eps_D=\frac{a+b\sqrt D}2$ we can replace $b$ by $1$ and $D$ by $Db^2$ without changing the unit $\eps_D$. Because of $a^2-b\sqrt D=4$ we get $\eps_D=\frac{a+\sqrt {a^2-4}}2$.
In particular, $\eps_D$ is determined by $a$ and it satisfies
$$
a-\eps_D(a)=\frac 2{a+\sqrt{a^2-4}},
$$
and the latter tends to zero as $a$ grows.
This implies that $\eps_D$ is close to an integer for large $D$, which implies that $\LG(\Ga)\ge 1$.
Assume $\LG(\Ga)>1$, then by the same argument we infer $\LG(\Ga)\ge 2$ and deduce
$$
\#\{\eps_D\le x\}=\le\frac x2+O(x^{1/2}),
$$
which contradicts the results of  \cite{Sprind}.
So part (a) follows.

Now for (b). Let $N\in\N$.
We have already seen that $\eps_D=\frac{a+b\sqrt D}2$ is determined by $a$.
As the number of $\eps_D\le x$ grows like $x$, it follows that the set of all $a\in\N$, such that $\frac{a+\sqrt{a^2-4}}2$ is a unit $\eps_D$, has density one.
Therefore the set of all such $a$ satisfying  $a\equiv 2\mod(N^4)$ has positive density $\frac1{N^4}$.
So, if we show that for every such $a$, there is an element of $\Ga(N)$, which as element of $\Ga(1)$ corresponds to $\eps_D=\frac{a+\sqrt{a^2-4}}2$, we have finished the proof.

So let $a$ satisfy $a\equiv 2\mod (N^4)$.
Write $a^2-4=b^2D$, where $D$ is a fundamental discriminant, so $b^2$ contains all square factors away from $2$.
By the choice of $a$, the number $N$ divides $b$.
We show that the element
$$
\ga=\mat{\frac{a+bD}2}{b\frac{D(D-1)}4}{b}{\frac{a-bD}2}
$$
does the job.
Since $a\equiv bD\mod 2$, the diagonal entries are integers.
Since either $D$ or $D-1$ is divisible by $4$, the off-diagonal entries are integers as well.
The trace is $a$ and the determinant is one, so $\ga$ corresponds to $\eps_D$ indeed.
It remains to show that $\ga$ lies in $\Ga(N)$.
As $N$ divides $b$, the off-diagonal entries are divisible by $N$, so $\ga$ is congruent to the diagonal matrix $\diag\(\frac{a+bD}2,\frac{a-bD}2\)$ modulo $N$.
Write $a=2+cN^4=2+lN$, so $l=cN^4$ and write $b=kN$, then
$$
\frac{a+bD}2=\frac{2+(l+kD)N}{2}=1+\frac{l+kD}2N.
$$
If $N$ is odd, $l+kD$ is even and $\frac{a+bD}2\equiv 1\equiv \frac{a-bD}2\mod(N)$.
If $N$ is even, so is $l$ and one has $k^2N^2D=b^2D=a^2-4=4cN^4+c^2N^8$, which implies that $k$ is even and so $\ga\in\Ga(N)$.
\end{proof}

Let $M$ be a non-split quaternion algebra over $\Q$ which splits over $\R$.
This means that there is an isomorphism $\phi:M(\R)\to M_2(\R)$ with $N_\red(\al)=\det(\phi(\al))$ for every $\al\in M(\R)$, where $N_\red$ is the reduced norm.
For any ring $R$, let $M^1(R)$ denote the group of all $\al\in M(R)$ with $N_\red(\al)=1$. Then $\phi$ induces an isomorphism $M^1(\R)\cong \SL_2(\R)$.

\begin{theorem}\label{prop1.3}
The arithmetic group $\Ga=\phi(M^1(\Z))/\pm 1\subset\PSL_2(\R)$ has bounded gaps.
\end{theorem}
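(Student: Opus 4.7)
The plan is to adapt the counting argument of Theorem~\ref{prop1.2}\,(a) by proving that the image of the reduced trace map $\tr:M^1(\Z)\to\Z$ contains a set of positive density. If $\al\in M^1(\Z)$ is hyperbolic then, being integral over $\Z$, it satisfies $\al^2-a\al+1=0$ with $a=\tr(\al)\in\Z$ and $|a|>2$. Its eigenvalues are $\eps_a^{\pm 1}$ with $\eps_a=(a+\sqrt{a^2-4})/2=a-a^{-1}+O(a^{-3})$, and $\Q[\al]$ is the real quadratic field $F_a=\Q(\sqrt{a^2-4})$. Hence the problem reduces to showing that the set $T\subset\Z$ of integers realised as traces of primitive hyperbolic elements of $M^1(\Z)$ has positive density.

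For a given $a\in\Z$, an element $\al\in M^1(\Z)$ with $\tr(\al)=a$ exists iff the order $\Z[\al]$ (of discriminant $a^2-4$) admits an embedding into the fixed maximal order $M(\Z)$. By Eichler's theorem on embedding numbers --- a form of the Hasse principle that descends ultimately from strong approximation for the simply connected group $M^1$, available because $M$ splits at infinity --- this holds iff a local embedding exists at every place. At an unramified finite prime $p$ one has $M_p\cong M_2(\Q_p)$, and a local embedding is given explicitly by the companion matrix of $x^2-ax+1$. At a finite prime $p$ where $M$ is ramified, the unique maximal order of $M_p$ contains every $\Z_p$-integral element of $M_p$, so the local obstruction reduces to asking that $F_a\otimes_\Q\Q_p$ be a field, equivalently that $a^2-4$ be a non-square in $\Q_p$.

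These last conditions are elementary congruences. At every odd ramified prime $p_i$ one can find a residue class $a_0\bmod p_i$ such that $a_0^2-4$ is a quadratic non-residue modulo $p_i$; at $p=2$, every odd $a$ satisfies $a^2-4\equiv 5\pmod 8$ and is therefore a non-square in $\Q_2$. The Chinese Remainder Theorem then produces a nonempty union of arithmetic progressions modulo $N:=8\prod_i p_i$ (where $p_1,\dots,p_k$ are the finite ramified primes of $M$) whose elements satisfy all local conditions, so $T$ has density at least $1/N$ in $\Z$. Since $\eps_a\sim a$, this gives $\LG(\Ga)\le N<\infty$; positivity $\LG(\Ga)>0$ is automatic because distinct integer traces $a<a'$ produce $\eps_{a'}-\eps_a\ge 1+o(1)$ as $a\to\infty$. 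Non-primitive elements cause no loss of density: if $\beta\in\Ga$ is primitive then $\tr(\beta^k)$ is a polynomial of degree $k$ in $\tr(\beta)$, so the non-primitive traces below $X$ number only $\sum_{k\ge 2}O(X^{1/k})=O(X^{1/2})=o(X)$.

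The main obstacle is the input to the second step: translating the transparent local conditions into the actual existence of an element of the \emph{fixed} maximal order $M(\Z)$ with the prescribed trace. This forces an appeal to Eichler's embedding theorem for orders in indefinite quaternion algebras, whose content is precisely that local embeddings of the conductor-$f$ order $\Z[\al]$ can be glued into a global embedding landing in the prescribed $M(\Z)$. Unlike the $\PSL_2(\Z)$ situation of Theorem~\ref{prop1.2}, no explicit matrix construction is available, and the argument must pass through strong approximation.
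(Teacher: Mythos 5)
Your argument is correct in substance, but it reaches the key existence statement by a genuinely different route than the paper. The paper fixes the explicit family of traces $a=2+4cR$ (with $R$ the product of the ramified primes and $c$ coprime to $2R$), which forces every $p\in S$ to be \emph{ramified} in $K=\Q(\sqrt{a^2-4})$ and makes the quadratic order $\CO_D$ maximal at each such $p$; it then quotes the class-number correspondence of \cite{class} (an optimal-embedding count) to produce a \emph{primitive} $\ga\in\Ga$ with $N(\ga)=\eps_D$, and gets $\LG(\Ga)>0$ by viewing $N(\Ga)$ as a subsequence of $N(\PSL_2(\Z))$. You instead impose congruences making the ramified primes \emph{inert} (non-residue conditions on $a^2-4$, oddness of $a$ at $2$) and appeal directly to Eichler's embedding theorem / strong approximation for plain (not optimal) embeddings into the maximal order; this buys you two simplifications --- no need to check local maximality of $\Z[\al]$ at the ramified primes (at such $p$ the valuation ring of the division algebra swallows every integral element, as you note), and no class-number formula --- at the price of having to handle primitivity yourself, which your $O(X^{1/2})$ sparsity count does adequately, and of invoking a heavier general theorem where the paper's citation is more hands-on. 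Two small points you should make explicit: the existence, for each odd ramified $p$, of $a_0$ with $a_0^2-4$ a non-residue $\bmod\ p$ deserves a one-line proof (if $x\mapsto x-4$ preserved squares $\bmod\ p$, every residue would be a square), and your bound should be stated as $\LG(\Ga)\le N+o(1)$ obtained from infinitely many pairs of realized traces at bounded distance (positive density of the surviving traces plus pigeonhole), rather than exactly $N$; also note you are assuming, as the paper implicitly does, that $M(\Z)$ is a maximal order, which is what makes the type-number-one/no-selectivity form of Eichler's theorem apply.
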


\begin{proof}
Let $S$ be the finite the set of primes $p$ at which $M$ does not split and let $R=\prod_{p\in S}p$ be its radical. Note that $S$ has at least two elements.
For a given $c\in\N$ which is coprime to $2R$ we set $a=2+4cR$ and $D=a^2-4=16cR(1+cR)$.
Write $D=m^2\Delta$, where $\Delta$ is square-free.
Then the discriminant $\disc(K)$ of $K=\Q(\sqrt D)$ is equal to $\Delta$ if $\Delta\equiv 1\mod 4$ and $4\Delta$ otherwise.
We now argue that every $p\in S$ is ramified in $K$, i.e. $p|\disc(K)$. If $p$ is odd, then $p$ divides $D$, but $p^2$ does not, hence $p|\Delta$, so $p$ divides the discriminant.
If $2\in S$, then from $D=16cR(1+cR)$ we see that $2|\Delta$, but $4\nmid\Delta$, so $\Delta\equiv 2\mod 4$, which means that $\disc(K)=4\Delta$ is even.
This implies that all primes in $S$ are ramified in $K$.
Next we need to show that $\CO_D$ is maximal at each $p\in S$.
This, however, follows from $p|\disc(K)$ and $p^2\nmid\disc(K)$ which we have shown for each $p\in S$.
We have shown that each $p\in S$ is non-decomposed in $K$ and that $\CO_D$ is maximal at $p$.
This implies (see \cite{class}), that there exists a primitive $\ga\in\Ga$ such that $N(\ga)=\eps_D$.
But the set of all $a=2+4cR$ contains infinite arithmetic progressions, which implies $\LG(\Ga)<\infty$.
Since, on the other hand, the sequence $N(\Ga)$ is a subsequence of $N(\PSL_2(\Z))$, we also have $\LG(\Ga)>0$.
\end{proof}

\begin{bibdiv} \begin{biblist}

\bib{Bass}{article}{
   author={Bass, Hyman},
   title={The Ihara-Selberg zeta function of a tree lattice},
   journal={Internat. J. Math.},
   volume={3},
   date={1992},
   number={6},
   pages={717--797},
   issn={0129-167X},
   review={\MR{1194071 (94a:11072)}},
   doi={10.1142/S0129167X92000357},
}

\bib{BO}{book}{
   author={Bunke, Ulrich},
   author={Olbrich, Martin},
   title={Selberg zeta and theta functions},
   series={Mathematical Research},
   volume={83},
   note={A differential operator approach},
   publisher={Akademie-Verlag},
   place={Berlin},
   date={1995},
   pages={168},
   isbn={3-05-501690-4},
   review={\MR{1353651 (97c:11088)}},
}

\bib{Chandra}{book}{
   author={Chandrasekharan, K.},
   title={Introduction to analytic number theory},
   series={Die Grundlehren der mathematischen Wissenschaften, Band 148},
   publisher={Springer-Verlag New York Inc., New York},
   date={1968},
   pages={viii+140},
}

\bib{class}{article}{
   author={Deitmar, Anton},
   title={Class numbers of orders in cubic fields},
   journal={J. Number Theory},
   volume={95},
   date={2002},
   number={2},
   pages={150--166},
   issn={0022-314X},
   review={\MR{1924095 (2003i:11171)}},
}

\bib{HA2}{book}{
   author={Deitmar, Anton},
   author={Echterhoff, Siegfried},
   title={Principles of harmonic analysis},
   series={Universitext},
   publisher={Springer},
   place={New York},
   date={2009},
   pages={xvi+333},
   isbn={978-0-387-85468-7},
   review={\MR{2457798 (2010g:43001)}},
}

\bib{Gardiner}{book}{
   author={Gardiner, Frederick P.},
   title={Teichm\"uller theory and quadratic differentials},
   series={Pure and Applied Mathematics (New York)},
   note={A Wiley-Interscience Publication},
   publisher={John Wiley \& Sons Inc.},
   place={New York},
   date={1987},
   pages={xviii+236},
   isbn={0-471-84539-6},
}

\bib{Hashimoto}{article}{
   author={Hashimoto, Ki-ichiro},
   title={Zeta functions of finite graphs and representations of $p$-adic
   groups},
   conference={
      title={Automorphic forms and geometry of arithmetic varieties},
   },
   book={
      series={Adv. Stud. Pure Math.},
      volume={15},
      publisher={Academic Press},
      place={Boston, MA},
   },
   date={1989},
   pages={211--280},
   review={\MR{1040609 (91i:11057)}},
}

\bib{Hejhal}{book}{
   author={Hejhal, Dennis A.},
   title={The Selberg trace formula for ${\rm PSL}(2,R)$. Vol. I},
   series={Lecture Notes in Mathematics, Vol. 548},
   publisher={Springer-Verlag},
   place={Berlin},
   date={1976},
   pages={vi+516},
}

\bib{Huber}{article}{
   author={Huber, Heinz},
   title={Zur analytischen Theorie hyperbolischen Raumformen und
   Bewegungsgruppen},
   language={German},
   journal={Math. Ann.},
   volume={138},
   date={1959},
   pages={1--26},
   issn={0025-5831},
}

\bib{Ihara}{article}{
   author={Ihara, Yasutaka},
   title={Discrete subgroups of ${\rm PL}(2,\,k_{\wp })$},
   conference={
      title={Algebraic Groups and Discontinuous Subgroups (Proc. Sympos.
      Pure Math., Boulder, Colo., 1965)},
   },
   book={
      publisher={Amer. Math. Soc.},
      place={Providence, R.I.},
   },
   date={1966},
   pages={272--278},
   review={\MR{0205952 (34 \#5777)}},
}

\bib{Maynard}{article}{
   author={Maynard, James},
   title={Small gaps between primes},
   journal={Ann. of Math. (2)},
   volume={181},
   date={2015},
   number={1},
   pages={383--413},
   issn={0003-486X},
   doi={10.4007/annals.2015.181.1.7},
}

\bib{McShane}{article}{
   author={McShane, Greg},
   author={Parlier, Hugo},
   title={Multiplicities of simple closed geodesics and hypersurfaces in
   Teichm\"uller space},
   journal={Geom. Topol.},
   volume={12},
   date={2008},
   number={4},
   pages={1883--1919},
   issn={1465-3060},
}

\bib{Randol}{article}{
   author={Randol, Burton},
   title={On the asymptotic distribution of closed geodesics on compact
   Riemann surfaces},
   journal={Trans. Amer. Math. Soc.},
   volume={233},
   date={1977},
   pages={241--247},
   issn={0002-9947},
   review={\MR{0482582 (58 \#2643)}},
}

\bib{Ruelle}{article}{
   author={Ruelle, David},
   title={Zeta-functions for expanding maps and Anosov flows},
   journal={Invent. Math.},
   volume={34},
   date={1976},
   number={3},
   pages={231--242},
   issn={0020-9910},
   review={\MR{0420720 (54 \#8732)}},
}

\bib{Serre}{book}{
   author={Serre, Jean-Pierre},
   title={Trees},
   series={Springer Monographs in Mathematics},
   note={Translated from the French original by John Stillwell;
   Corrected 2nd printing of the 1980 English translation},
   publisher={Springer-Verlag},
   place={Berlin},
   date={2003},
   pages={x+142},
   isbn={3-540-44237-5},
   review={\MR{1954121 (2003m:20032)}},
}

\bib{Sprind}{article}{
   author={Sprind{\v{z}}uk, V. G.},
   title={The distribution of the fundamental units of real quadratic
   fields},
   journal={Acta Arith.},
   volume={25},
   date={1973/74},
   pages={405--409},
   issn={0065-1036},
   review={\MR{0344223 (49 \#8963)}},
}

\bib{Sunada}{article}{
   author={Sunada, Toshikazu},
   title={$L$-functions in geometry and some applications},
   conference={
      title={Curvature and topology of Riemannian manifolds},
      address={Katata},
      date={1985},
   },
   book={
      series={Lecture Notes in Math.},
      volume={1201},
      publisher={Springer},
      place={Berlin},
   },
   date={1986},
   pages={266--284},
   review={\MR{859591 (88g:58152)}},
   doi={10.1007/BFb0075662},
}

\bib{Zhang}{article}{
   author={Zhang, Yitang},
   title={Bounded gaps between primes},
   journal={Ann. of Math. (2)},
   volume={179},
   date={2014},
   number={3},
   pages={1121--1174},
   issn={0003-486X},
   doi={10.4007/annals.2014.179.3.7},
}

\end{biblist} \end{bibdiv}

{\small Mathematisches Institut\\
Auf der Morgenstelle 10\\
72076 T\"ubingen\\
Germany\\
\tt deitmar@uni-tuebingen.de}

\today

\end{document}